\newtheorem{theorem}{Theorem}[section]
\newtheorem{lemma}[theorem]{Lemma}
\theoremstyle{definition}
\newtheorem{definition}[theorem]{Definition}
\newtheorem{proposition}[theorem]{Proposition}
\theoremstyle{remark}
\newtheorem{remark}[theorem]{Remark}
\theoremstyle{notation}
\numberwithin{equation}{section}
\theoremstyle{corollary}
\newcommand{\Map}{\mathrm{Map}}
\newcommand{\Top}{\mathsf{Top}}
\newcommand{\Chk}{\mathsf{dgMod}_{k}}
\newcommand{\C}{\mathsf{C}}
\newcommand{\Com}{\mathsf{Com}}
\newcommand{\Ass}{\mathsf{Ass}}
\newcommand{\PP}{\mathsf{P}}
\newcommand{\EE}{\mathsf{E}_{\infty}}
\newcommand{\QQ}{\mathsf{Q}}
\newcommand{\coh}{\mathrm{H}}
\newcommand{\Ho}{\mathrm{Ho}}
\newcommand{\Op}{\mathrm{Op}}
\newcommand{\Algk}{\mathsf{dgAlg}_{k}}
\newcommand{\EAlgk}{\mathsf{E_{\infty}dgAlg}_{k}}
\newcommand{\CAlgk}{\mathsf{dgCAlg}_{k}}
\begin{document}

\title[]{Comparing commutative and associative unbounded differential graded algebras over $\mathbb{Q}$ from homotopical point of view}

\author[]{Ilias Amrani}
\address{Department of Mathematics, Masaryk University\\ Kotlarska 2\\ Brno, Czech Republic.}
\email{ilias.amranifedotov@gmail.com}
\email{amrani@math.muni.cz}
\thanks{Supported by the project CZ.1.07/2.3.00/20.0003
of the Operational Programme Education for Competitiveness of the Ministry
of Education, Youth and Sports of the Czech Republic.
}

\thanks{}

\subjclass[2000]{Primary 55, Secondary 14 , 16, 18}



\keywords{DGA, CDGA, Mapping Space, Noncommutative and Commutative Derived Algebraic Geometry, Rational Homotopy Theory.}

\begin{abstract}
In this paper we establish a faithfulness result, in a homotopical sense, between a subcategory of the model category of augmented differential graded commutative algebras CDGA and a subcategory of the model category of augmented differential graded algebras DGA over the field of rational numbers $\mathbb{Q}$. 
\end{abstract}

\maketitle
\section*{Introduction}
It is well known that the forgetful functor from the category of commutative $k$-algebras to the category of category of associative $k$-algebras is fully faithful. We have an analogue result between the category of unbounded differential graded commutative $k$-algebras $\CAlgk$ and the category of unbounded differential graded associative algebras $\Algk$. The question that we want explore is the following:
Suppose that $k=\mathbb{Q}$, is it true that forgetful functor $ U:\CAlgk\rightarrow \Algk$ induces a fully faithful functor at the level of homotopy categories
$$\mathbf{R}U: \Ho (\CAlgk) \rightarrow \Ho(\Algk).$$
The answer is \textbf{no}. A nice and easy counterexample was given by Lurie. He has considered $k[x,y]$ the free commutative CDGA in two variables concentrated in degree 0. It follows obviously  that 
$$\Ho (\CAlgk)(k[x,y], S)\simeq \coh^{0}(S)\oplus  \coh^{0}(S),$$
while 
$$\Ho (\Algk)(k[x,y], S)\simeq \coh^{0}(S)\oplus  \coh^{0}(S)\oplus \coh^{-1}(S).$$ 
Something nice happens if we consider the category of augmented CDGA denoted by $\CAlgk^{\ast}$ and augmented DGA denoted by  $\Algk^{\ast}$.
\begin{theorem}[\ref{thm}]
For any $R$ and $S$ in  $\CAlgk^{\ast}$, the induced map by the forgetful functor 
$$\Omega \Map_{\CAlgk^{\ast}}(R,S)\rightarrow \Omega  \Map_{\Algk^{\ast}}(R,S),$$
has a retract, in particular
$$\pi_{i}\Map_{\CAlgk^{\ast}}(R,S)\rightarrow \pi_{i}\Map_{\Algk^{\ast}}(R,S)$$
is injective $\forall~ i>0$.
\end{theorem}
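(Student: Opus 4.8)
The plan is to reduce the comparison of the two looped mapping spaces to a comparison of tangent complexes at the augmentation, and then to exhibit the commutative tangent complex as a natural retract of the associative one by means of the Hodge ($\lambda$-)decomposition of Hochschild homology, which is available precisely because $k=\mathbb{Q}$.

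First I would pass from augmented algebras to non-unital ones via the augmentation-ideal equivalence, so that the canonical basepoint of each mapping space becomes the zero morphism. With this basepoint the based loop space and its homotopy groups acquire a linear, stable flavor: the standard identification of the homotopy groups of a based mapping space of augmented algebras with derived derivations / (co)homology should give, for $i>0$,
$$\pi_i \Map_{\CAlgk^{\ast}}(R,S) \cong H^{-i}_{\mathrm{AQ}}(R;S), \qquad \pi_i \Map_{\Algk^{\ast}}(R,S) \cong \mathrm{HH}^{-i}(R;S),$$
where $S$ is regarded as a module through the augmentation and $H_{\mathrm{AQ}}$ denotes Andr\'e--Quillen (Harrison) cohomology. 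More structurally, I would identify $\Omega\Map_{\CAlgk^{\ast}}(R,S)$ with the mapping space out of the commutative cotangent complex of $R$, and $\Omega\Map_{\Algk^{\ast}}(R,S)$ with the mapping space out of the associative cotangent complex (the derived Hochschild object), the forgetful functor inducing the comparison between them. Establishing these identifications functorially in the unbounded derived setting, via cofibrant (quasi-free) models and a convergent obstruction/spectral sequence, is the first technical task.

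The heart of the argument is the construction of the retraction. The forgetful comparison realizes the commutative cotangent complex as the weight-one piece of the derived Hochschild complex, i.e. as the Harrison summand inside Hochschild homology. Over $\mathbb{Q}$ the Eulerian idempotents $e^{(j)}$ act on the derived Hochschild complex of a commutative algebra and produce the $\lambda$-/Hodge decomposition
$$\mathrm{HH}_\bullet(R;S) \;\cong\; \bigoplus_{j\geq 1}\mathrm{HH}^{(j)}_\bullet(R;S), \qquad \mathrm{HH}^{(1)}_\bullet(R;S) \simeq \text{(Andr\'e--Quillen / Harrison)},$$
with the dual statement holding in cohomology. The projection $e^{(1)}$ onto the weight-one summand is the sought retraction: it splits the map induced by $U$ and, being natural and idempotent, upgrades to a retract of the looped mapping spaces themselves, not merely of their homotopy groups. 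Passing to homotopy groups then yields the claimed split injectivity of $\pi_i\Map_{\CAlgk^{\ast}}(R,S)\to\pi_i\Map_{\Algk^{\ast}}(R,S)$ for all $i>0$.

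The main obstacle I anticipate is promoting the classical Hodge decomposition, usually stated for underived Hochschild homology of an ordinary commutative algebra, to a genuinely homotopical retraction valid for arbitrary augmented unbounded $R$ and $S$. Two points require care: verifying that the Eulerian idempotents are defined and convergent on the full derived (unbounded) Hochschild complex — this is exactly where characteristic zero is indispensable, since the idempotents involve denominators — and checking that the weight-one summand is identified, compatibly and naturally, with the object computing the commutative looped mapping space, so that the retraction is realized at the level of spaces (or connective spectra) rather than only on $\pi_\ast$. Handling the basepoint and the augmentation correctly throughout, so that the comparison is the summand inclusion on the nose, is the remaining bookkeeping.
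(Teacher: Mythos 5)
Your route is genuinely different from the paper's. The paper never linearizes: it works entirely at the level of the algebras themselves, factoring $\Ass\rightarrow\Com$ through an $\EE$-operad, and proves (Lemma \ref{suspension}) that the commutative suspension $\mathrm{B}[str(R)]$ is a retract of $Ab(\Sigma R)$, the abelianization of the associative suspension, by comparing the two defining pushouts and using that the unit map $Ab(\underline{k})\rightarrow str(\underline{k})$ admits a section in $\CAlgk$ (Proposition \ref{unite}). The space-level retract of $\Omega\Map_{\CAlgk^{\ast}}(R,S)\rightarrow\Omega\Map_{\Algk^{\ast}}(R,S)$ then falls out of the suspension--loop adjunction with no need to identify the homotopy groups with any cohomology theory, and it works uniformly for unbounded $R,S$ and for all $\pi_{i}$, $i>0$. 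Your approach, by contrast, would if completed yield strictly more refined information --- the full $\lambda$-decomposition and an identification of the complementary summands --- and it makes transparent the link to the $\mathrm{AQ}^{\ast}\rightarrow\mathrm{HH}^{\ast}$ comparison that the paper only records as a corollary in its applications section. The two are of course related in spirit: the Eulerian idempotent splitting of the Hochschild complex is the linearized shadow of the retract of suspensions.

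That said, three of the steps you defer are genuine gaps rather than bookkeeping. First, the identification $\pi_{i}\Map_{\Algk^{\ast}}(R,S)\cong\mathrm{HH}^{-i+1}(R;S)$ (note the degree shift, which your formulas omit) is available in the literature for \emph{connective} algebras; extending it to arbitrary unbounded $R$ and $S$, with the correct (product-type) totalization of the Hochschild complex, is not in the cited sources and would need its own proof, including the degenerate range $i=1$ where the obstruction-theoretic identifications are most delicate. Second, the theorem asserts a retract of the loop \emph{spaces}, not merely split injections on $\pi_{\ast}$; to realize $e^{(1)}$ as a map of spaces you must first exhibit $\Omega\Map_{\Algk^{\ast}}(R,S)$ as the underlying space of a (co)chain-level mapping object on which the idempotent acts naturally, which is precisely the linearization statement you have not yet established. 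Third, you must check that the map induced by the forgetful functor agrees, under these identifications, with the weight-one summand inclusion of the Hodge decomposition; this is classical for ordinary commutative algebras but needs verification in the derived, augmented, unbounded setting. The paper's more elementary argument sidesteps all three issues, which is worth keeping in mind if you pursue your version: the payoff is real, but the infrastructure you would need to build is substantially heavier than the model-categorical retract the paper constructs directly.
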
  
Let $S$ be a differential graded commutative algebra which is a "loop" of an other CDGA algebra $A$, i.e. $S= \mathsf{Holim}(k\rightarrow A\leftarrow k)$, 
where the homotopy limit is taken in the model category $\CAlgk$. A direct consequence of our theorem is that the right derived functor $\mathbf{R}U$ is a
faithful functor i.e., the induced map $~\Ho(\CAlgk^{\ast})(R,S)\rightarrow \Ho(\Algk^{\ast})(R,S)$ is injective. 
\subsection*{Interpretation of the result in the derived algebraic geometry}
Rationally, any pointed topological $X$ space can be viewed as an augmented (connective) commutative differential graded algebra via its cochain complex $C^{\ast}(X,\mathbb{Q})$. In case where $X$ is a  simply connected rational space, the cochain complex $C^{\ast}(X,\mathbb{Q})$ carries the whole homotopical information about $X$, by Sullivan Theorem \cite{hess2007rational}. Moreover, the bar construction $\mathrm{B}C^{\ast}(X,\mathbb{Q})$ is identified (as $\EE$-DGA) to $C^{\ast}(\Omega X,\mathbb{Q})$ and $\Omega C^{\ast}(X,\mathbb{Q})$ is identified (as $\EE$-DGA) to  $C^{\ast}(\Sigma X,\mathbb{Q})$ cf. \cite{fresse2010bar}. This interpretation allows us to make the following definition: A generalized rational pointed space is an augmented commutative differential graded $\mathbb{Q}$-algebra (possibly unbounded). In the same spirit, we define a pointed generalized $\textbf{noncommutative rational space}$ as  an augmented differential graded $\mathbb{Q}$-algebra  (possibly unbounded). Let $A$ be any augmented CDGA resp. DGA,  we will call a CDGA resp. DGA of the form $\Omega A$ a \textit{op-suspended} CDGA resp. DGA.
Our theorem \ref{thm}, can be interpreted as follows:\\
 \textbf{ The homotopy category of op-suspended generalized commutative rational spaces is a subcategory of the homotopy category of op-suspended generalized noncommutative rational spaces.}

\section{DGA, CDGA and $\mathsf{E}_{\infty}$-DGA.}
We work in the setting of unbounded differential graded $k$-modules $\Chk$. This is a a symmetric monoidal closed model category ($k$ is a commutative ring). 
We denote the category of (reduced) operads in $\Chk$ by $\mathsf{Op}_{k}$. We follow notations and definitions of \cite{berger2003axiomatic}, we say that an operad
$\mathsf{P}$ is \textit{admissible} if the category of $\mathsf{P}-\Algk$ admits a model structure where the fibrations are degree wise surjections and weak 
equivalence are quasi-isomorphisms. For any map of operads $\phi: \mathsf {P}\rightarrow \mathsf{Q}$ we have an induced adjunction of the corresponding 
categories of algebras:
$$\xymatrix{ \mathsf{P}-\Algk \ar@<2pt>[r]^{\phi_{!}} & \mathsf{Q}-\Algk. \ar@<2pt>[l]^{\phi^{\ast}} }$$
A $\Sigma$-cofibrant operad $\mathsf{P}$ is an operad such that $\mathsf{P}(n)$ is $k[\Sigma_{n}]$-cofibrant in $\mathsf{dgMod}_{k[\Sigma_{n}]}$. 
Any cofibrant operad $\mathsf{P}$ is a $\Sigma$-cofibrant operad \cite[Proposition 4.3]{berger2003axiomatic}. We denote the associative operad by $\mathsf{Ass}$ and the 
commutative operad by $\mathsf{Com}$.The operad $\mathsf{Ass}$ is an admissible operad and $\Sigma$-cofibrant, while the operad $\mathsf{Com}$ is not admissible in general.
In the rational case, when $k=\mathbb{Q}$ the operad $\mathsf{Com}$ is admissible but not $\Sigma$-cofibrant. More generally any cofibrant operad $\mathsf{P}$ is admissible \cite[Proposition 4.1, Remark 4.2]{berger2003axiomatic}. We define a symmetric tensor product of operads by the formulae
 $$[\PP\otimes \QQ](n)=\PP(n)\otimes \QQ(n),~~ \forall~ n\in\mathbb{N}.$$

\begin{lemma}\label{cofibration}
Suppose that $\phi:\mathsf{Ass}\rightarrow \PP$ is a cofibration of operads.The operad $\mathsf{P}$ is admissible and the functor $\phi^{\ast}:\mathsf{P}-\Algk\rightarrow \Algk$ preserves fibrations, weak equivalences and cofibrations with cofibrant domain in the inderleing category $\Chk$.
\end{lemma}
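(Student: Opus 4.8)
The plan is to reduce admissibility to $\Sigma$-cofibrancy and then treat the three preservation claims separately, the only delicate one being cofibrations. First I would prove that $\PP$ is $\Sigma$-cofibrant. Since $\mathsf{Ass}$ is $\Sigma$-cofibrant and $\phi$ is a cofibration in $\OP$, I would write $\phi$ as a retract of a relative cell complex built from $\mathsf{Ass}$ by iterated pushouts of free maps $\mathsf{F}(\partial c)\to\mathsf{F}(c)$, where $c$ runs over generating cofibrations of collections and $\mathsf{F}$ is the free-operad functor. Each such pushout carries the Berger--Moerdijk filtration for operadic cell attachments, whose successive subquotients are assembled from $\Sigma$-cofibrant collections; hence $\Sigma$-cofibrancy is preserved at every stage, and, passing to the transfinite colimit and then to the retract, $\PP$ is $\Sigma$-cofibrant. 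Admissibility of $\PP$ then follows from the transfer of the model structure to $\Sigma$-cofibrant operads over $\Chk$ (the unit of $\Chk$ is cofibrant and the requisite interval exists), exactly as for $\mathsf{Ass}$.

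For the preservation claims the key structural fact is that $\phi^{\ast}$ commutes with the forgetful functors to $\Chk$: on underlying complexes it is the identity. Because fibrations and weak equivalences in both $\PP\text{-}\Algk$ and $\Algk$ are created by these forgetful functors (degreewise surjections, respectively quasi-isomorphisms), $\phi^{\ast}$ preserves them at once. For the same reason $\phi^{\ast}$ preserves every colimit that the forgetful functors create, in particular filtered colimits, reflexive coequalizers and transfinite composites.

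Under the reading in which the last clause asks only that $\phi^{\ast}$ send a cofibration with cofibrant domain to a map whose underlying $\Chk$-map is a cofibration, the statement is now immediate: $\Sigma$-cofibrancy of $\PP$ ensures that such a map is already an underlying $\Chk$-cofibration with cofibrant domain, and $\phi^{\ast}$ leaves underlying complexes unchanged. The substantive content, and the main obstacle, is the stronger assertion that $\phi^{\ast}f$ is a cofibration in $\Algk$; here the hypothesis that $\phi$ is a cofibration of operads (not merely a map) is essential, since as a right adjoint in the adjunction $(\phi_{!},\phi^{\ast})$ the functor $\phi^{\ast}$ has no reason to preserve the pushouts out of which cofibrations are built.

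To establish the stronger assertion I would reduce, using the $\Sigma$-cofibrancy of both $\mathsf{Ass}$ and $\PP$ together with the colimit-preservation of the previous paragraph, to two points: that a cofibrant $\PP$-algebra restricts to a cofibrant associative algebra, and that $\phi^{\ast}$ carries a single pushout of a free map $\mathsf{F}_{\PP}(i)$ (along a generating cofibration $i$ of $\Chk$) to a cofibration in $\Algk$. Both are analysed through the operadic pushout filtration, whose associated graded is a coproduct of terms of the form $\PP(n)\otimes_{\Sigma_{n}}(\cdots)$; the point to verify is that restricting along the operad cofibration $\phi$ rewrites the adjoined free $\PP$-operations as freely adjoined associative operations, so that the restricted extension is genuinely cellular over $\mathsf{Ass}$ and no non-cofibrant relations are introduced. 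Granting this, closure of cofibrations under transfinite composition and retracts concludes the argument. I expect this filtration bookkeeping to be the only real difficulty.
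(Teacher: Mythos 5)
Your reduction of the last (and hardest) clause to ``filtration bookkeeping'' is precisely where the proposal fails to prove the statement, and the step you defer is not a routine verification. The functor $\phi^{\ast}$ is a \emph{right} adjoint: it does not commute with the free functors ($\phi^{\ast}\mathsf{F}_{\PP}(Y)$ is not $\mathsf{F}_{\Ass}(Y)$ --- it carries all the operations of $\PP$, not just the associative ones) nor with pushouts along free maps, so the claim that restricting along $\phi$ ``rewrites the adjoined free $\PP$-operations as freely adjoined associative operations'' is false as stated; the associated graded $\PP(n)\otimes_{\Sigma_{n}}(\cdots)$ of the operadic pushout filtration yields at best an underlying $\Chk$-cofibration, not a cellular structure over $\Ass$. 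The paper avoids the filtration entirely and uses the hypothesis that $\phi$ is a cofibration as a \emph{lifting} datum: factor $\phi^{\ast}f$ in $\Algk$ as a cofibration $i\colon A\rightarrow P$ followed by a trivial fibration $p\colon P\rightarrow B$; by Rezk's lemma on endomorphism operads of diagrams, $\mathrm{End}_{\{A\rightarrow P\rightarrow B\}}\rightarrow \mathrm{End}_{\{A\rightarrow B\}}$ is a trivial fibration, so the cofibration $\Ass\rightarrow\PP$ lifts against it; this makes $i$ and $p$ maps of $\PP$-algebras, and lifting $f$ against $p$ in $\PP-\Algk$ exhibits $f$ as a retract of $i$, hence a cofibration in $\Algk$. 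Without this (or a genuine proof of your cellularity claim) the main assertion is not established.

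The admissibility step also has a gap for general $k$: the interval in $\Chk$ is coassociative but not cocommutative, so $\Sigma$-cofibrancy plus a cofibrant unit does not give admissibility ``exactly as for $\Ass$'' --- for $\Ass$ one uses that $\Ass$ is a Hopf operad and that $I$ is an $\Ass$-coalgebra, and a general $\Sigma$-cofibrant operad has no diagonal. Here too the paper exploits the cofibrancy of $\phi$: it lifts $\Ass\rightarrow\Ass\otimes\Ass\rightarrow\PP\otimes\EE^{'}$ against the trivial fibration $id\otimes r\colon \PP\otimes\EE^{'}\rightarrow\PP\otimes\Com=\PP$ to produce a section, with $I$ an $\EE^{'}$-coalgebra, and then invokes Berger--Moerdijk. (Over $k=\mathbb{Q}$ this point is harmless, but the lemma is stated before specializing.) Your remaining observations --- that fibrations and weak equivalences are created in $\Chk$ and hence preserved, and that a cofibrant $\PP$-algebra has cofibrant underlying complex --- agree with the paper and are correct.
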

 
  \def\cocartesien{%
    \ar@{-}[]+L+<-6pt,+1pt>;[]+LU+<-6pt,+6pt>%
    \ar@{-}[]+U+<-1pt,+6pt>;[]+LU+<-6pt,+6pt>%
  }

\begin{proof}
First of all, the operad $\PP$ is admissible, indeed we use the cofibrant resolution $r:\EE\rightarrow \Com$ and consider the following pushout in $\Op_{k}$ given by:
$$\xymatrix{\Ass_{\infty}\ar@{^{(}->}[r]\ar@{>>}[d]^{\sim} & \EE\ar[d]^{\alpha}\\
\Ass\ar[r]^{f} & \EE^{'}}$$
Where $\Ass_{\infty}$ in the cofibrant replacement of $\Ass$ in $\Op_{k}$ and $\Ass_{\infty}\rightarrow \EE$ is a cofibration. Since the category $\Op_{k}$ is left proper in the sense of \cite[Theorem 3]{spitzweck2001operads}, we have that $\alpha:\EE\rightarrow \EE^{'}$ is an equivalence. We denote by $I$ the unit  interval in the category 
$\Chk$ which is strictly coassociative. The opposite endomorphism operad $\mathsf{End}^{op}(I)$ has a structure of $\EE$-algbra and $\Ass_{\infty}$-algebra which factors through $\Ass$ i.e., we have two compatible maps of operads:
$$\xymatrix{
\Ass_{\infty}\ar@{^{(}->}[r]\ar@{>>}[d]^{\sim} & \EE\ar[d]^{\alpha}\ar[rdd] &\\
\Ass\ar@{^{(}->}[r]^{f}\ar[rrd] & \EE^{'}\ar@{.>}[rd] \cocartesien&\\
& & \mathsf{End}^{op}(I)
}$$
by the universality of the pushout, we have a map of operads $\EE^{'}\rightarrow \mathsf{End}^{op}(I)$. This means that the unit  interval $I$ has a structure of $\EE^{'}$-colagebra \cite[p.4]{berger2003axiomatic}. 
Moreover, we have a commutative diagram in $\Op_{k}$ given by 
$$\xymatrix{
\Ass\ar[r]^-{\Delta}\ar@{^{(}->}[dd]^{\phi} & \Ass\otimes \Ass\ar[r]^{\phi\otimes f} & \PP\otimes \EE^{'}\ar@{>>}[dd]^{\sim}_{id\otimes r}\\
&&\\
\PP \ar[rr]^{id} & & \PP\otimes\Com=\PP
}$$
where the diagonal map $\Delta: \Ass\rightarrow \Ass\otimes \Ass$ is induced by the diagonals $\Sigma_{n}\rightarrow \Sigma_{n}\times \Sigma_{n}$. Hence, the map $\PP\otimes \EE^{'}\rightarrow \PP$ admits a section. It implies by \cite[Proposition 4.1]{berger2003axiomatic}, that $\PP$ is admissible and $\Sigma$-cofibrant.  
Since all objects in $\mathsf{P}-\Algk$ are fibrant and $\phi^{\ast}$ is a right Quillen adjoint, it preserves fibrations and weak equivalences. 

Since $\PP$ is an admissible operad, we have a Quillen adjunction 
$$\xymatrix{ \Algk \ar@<2pt>[r]^{\phi_{!}} & \PP-\Algk , \ar@<2pt>[l]^{\phi^{\ast}}  }$$
where the functor $\phi^{\ast}$ is identified to the forgetful functor. Moreover, the model structure on  $\PP-\Algk$ is the transfered model structure from the cofibrantly generated model structure $\Algk $ via the adjunction $\phi_{!},\phi^{\ast}$. Suppose that $f: A\rightarrow C$ is a cofibration in $\PP-\Algk$ such that $A$ is cofibrant in $\Chk$. We factor this map as a cofibration followed by a trivial fibration 
$$\xymatrix{A\ar@{^{(}->}[r]^{i} & P \ar@{->>}[r]^{p}_{\sim} & B}$$ 
in the category $\Algk$. By \cite[Lemma 4.1.16]{rezk1996spaces}, we have an induced map of endomorphism operads (of diagrmas): 
$$\mathrm{End}_{\{A\rightarrow P\rightarrow B\}}\rightarrow \mathrm{End}_{\{A\rightarrow B\}}$$
which is a trivial fibration. Moreover, we have the following commutative diagram in $\Op_{k}$
$$\xymatrix{
\Ass\ar@{^{(}->}[d]\ar[r] & \mathrm{End}_{\{A\rightarrow P\rightarrow B\}}\ar@{->>}[d]^{\sim}\\
\PP\ar[r] \ar@{.>}[ru] & \mathrm{End}_{\{A\rightarrow B\}}
} $$
Since $\Op_{k}$ is a model category, it implies that we have a lifting map of operads $\PP\rightarrow \mathrm{End}_{\{A\rightarrow P\rightarrow B\}}$, hence $i$ and $p$ are maps of $\PP-\Algk$. Therefore, we consider the following commutative square in the category $\PP-\Algk$
$$\xymatrix{
A \ar@{^{(}->}[d]_{f}\ar[r]^{i} & P\ar@{->>}[d]_{\sim}^{p}\\
B\ar[r]^{id}\ar@{.>}[ru]^{r} & B
} $$
the lifting map $r$ exists since $\PP-\Algk$ is a model category, we conclude that $p \circ r=id$ and $r\circ f=i$, which means that $f$ is a retract of $i$, hence $f$ is a cofibration in $\Algk$. 
\end{proof}   
\begin{remark}
With the same notation as in \ref{cofibration}, if $A$ is a cofibrant object in  $\PP-\Algk$ then $A$ is a cofibrant object in $\Chk$. Indeed $k\rightarrow A$ is a cofibration in $\PP-\Algk$, by the previous lemma $k\rightarrow A$ is a cofibration in $\Algk$. Therefore, $k\rightarrow A$ is a cofibration in $\Chk$. 
\end{remark}
\section{Suspension in CGDA and DGA}
We denote the the operad $\EE^{'}$ of the previous section by $\EE$, and $k=\mathbb{Q}$.
\subsection{$\mathsf{E}_{\infty}$-DGA}  
We have a map of operads  $\mathsf{Ass}\rightarrow\mathsf{Com}$, which we factor as cofibration followed by a trivial fibration. 
$$\xymatrix{\mathsf{Ass}\ar@{^{(}->}[r] & \mathsf{E}_{\infty}\ar@{>>}[r]^{\sim} & \mathsf{Com}
}$$
As a consequence, we have the following Quillen adjunctions
$$\xymatrix{ \Algk \ar@<2pt>[r]^{Ab_{\infty}} & \EAlgk \ar@<2pt>[l]^{U}  \ar@<2pt>[r]^{str} & \CAlgk \ar@<2pt>[l]^{U^{'}} }$$
These adjunctions have the following properties:
\begin{itemize}
\item The functors $U^{'}$ and $ U\circ U^{'}$ and are the forgetful functors, they are fully faithful cf  \ref{abelianization} and \ref{strictification}.
\item The functors $str, ~ U^{'}$ form a Quillen equivalence since $k=\mathbb{Q}$ cf \cite[Corollary 1.5]{kriz1995operads}. The functor $str$ is the strictification functor.
\item The functors $Ab_{\infty},~U$ form a Quillen pair. 
\item The composition $str \circ Ab_{\infty}$ is the abelianization functor $Ab:\Algk\rightarrow \CAlgk.$
\item The functors $str$ and $Ab$ are idenpotent functors. cf  \ref{abelianization} and \ref{strictification}.
\end{itemize}
The model categories $\CAlgk^{\ast}$ and $\Algk^{\ast}$ and $\EE\Algk^{\ast}$ are pointed model categories. It is natural to introduce the suspension functors in these categories. 

\begin{definition}\label{suspension}
Let $\C$ be any pointed model category, we denote the point by $1$, and let $A\in\C$, a suspension $\Sigma A$ is defined as $\mathsf{hocolim}(1\leftarrow A\rightarrow 1)$. 
\end{definition}

\begin{proposition}\label{strictification}
Any map $f:A\rightarrow S$ in $\EAlgk$, where $S$ is in $\CAlgk$ factors in a unique way as $A\rightarrow str(A)\rightarrow S$ and the forgetful functor $U^{'}: \CAlgk\rightarrow \EAlgk$ is fully faithful. Moreover, the unit of the adjunction $\nu_{A}:A\rightarrow str(A)$ is a fibration. 
\end{proposition}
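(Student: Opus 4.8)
The three assertions split into two formal ones and one carrying the genuine content. Write $\phi\colon\EE\to\Com$ for the trivial fibration $\EE\twoheadrightarrow\Com$ fixed above, so that $str=\phi_{!}$ and $U^{'}=\phi^{\ast}$ are the induced change-of-operad adjunction $str\dashv U^{'}$; recall in particular that each $\phi(n)\colon\EE(n)\to\Com(n)$ is a degreewise surjection in $\Chk$. The factorization statement is then nothing but the universal property of the unit $\nu_{A}\colon A\to U^{'}str(A)$: given $f\colon A\to U^{'}S$ with $S\in\CAlgk$, the adjunction provides a unique morphism $g\colon str(A)\to S$ in $\CAlgk$ with $U^{'}(g)\circ\nu_{A}=f$, which is the asserted unique factorization $A\to str(A)\to S$.

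For the full faithfulness of $U^{'}$ I would show that the counit $str\,U^{'}\to\mathrm{id}$ is an isomorphism, equivalently that every morphism $h\colon U^{'}S\to U^{'}T$ of $\EE$-algebras with $S,T\in\CAlgk$ is already a morphism of commutative algebras. This is forced by the surjectivity of $\phi$: the $\Com(n)$-operations on $S$ and on $T$ are the images under $\phi(n)$ of the corresponding $\EE(n)$-operations, so compatibility of $h$ with the latter entails compatibility with the former; since $h$ and the resulting $\Com$-morphism have the same underlying chain map, $U^{'}$ is fully faithful.

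The substantive point, and the main obstacle, is that $\nu_{A}$ is a fibration. Because $\EE$ is admissible (Lemma \ref{cofibration}), the fibrations of $\EAlgk$ are exactly the degreewise surjections, so it suffices to prove that $\nu_{A}$ is surjective in every degree. First I would check this on free algebras: for $A=F_{\EE}(V)$ the unit is the canonical map $F_{\EE}(V)\to U^{'}F_{\Com}(V)$, which on underlying complexes is $\bigoplus_{n}\phi(n)\otimes_{\Sigma_{n}}V^{\otimes n}$ and is therefore surjective, each $\phi(n)$ being so and coinvariants being right exact. For a general $A$ I would invoke the canonical reflexive coequalizer presentation $F_{\EE}U_{\EE}F_{\EE}U_{\EE}A\rightrightarrows F_{\EE}U_{\EE}A\to A$. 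The forgetful functor $U_{\EE}\colon\EAlgk\to\Chk$ creates reflexive coequalizers, while $str=\phi_{!}$ preserves all colimits and satisfies $\phi_{!}F_{\EE}=F_{\Com}$; applying both functors yields a commuting ladder of reflexive coequalizers in $\Chk$ whose two outer vertical maps are the free-algebra units treated above, hence surjective. Since coequalizer projections are epimorphisms, a short diagram chase shows that the induced map on coequalizers, which is $\nu_{A}$, is surjective. The only delicate ingredient here is the creation of reflexive coequalizers by $U_{\EE}$, which holds because the free $\EE$-algebra monad is built from $\otimes$ and $\Sigma_{n}$-coinvariants and hence preserves reflexive coequalizers.
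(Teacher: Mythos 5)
Your proposal is correct, and for two of the three claims it follows the same route as the paper: the unique factorization is the universal property of the unit of the adjunction $str\dashv U^{'}$, and full faithfulness of $U^{'}$ comes from observing that an $\EE$-morphism between honest commutative algebras is automatically a $\Com$-morphism. (The paper gets this last point from the weaker observation that every $\EE$-map is strictly associative, since $\Ass\rightarrow\EE$ is fixed and a map of commutative algebras is determined by compatibility with the associative product; you instead use surjectivity of each $\phi(n)\colon\EE(n)\rightarrow\Com(n)$ to transport all $\Com$-operations along $h$. Both work; yours is the more robust formulation, since it does not rely on the special shape of $\Com$.) The real divergence is in the fibration claim: the paper disposes of it in one sentence, asserting that surjectivity of $\nu_{A}$ ``follows from the universal property of $str(A)$,'' which is not an argument so much as a pointer to one. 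Your two-step proof --- surjectivity on free algebras because $\phi(n)$ is a degreewise surjection and $\Sigma_{n}$-coinvariants preserve epimorphisms, then descent to arbitrary $A$ along the canonical reflexive coequalizer presentation, using that the forgetful functor to $\Chk$ creates reflexive coequalizers and that $\phi_{!}F_{\EE}=F_{\Com}$ --- is exactly the argument the paper leaves implicit, and it is the honest way to see that $str(A)$ is a quotient of $A$ as a chain complex. The only point worth flagging explicitly in your write-up is that the fibrations of $\EAlgk$ are the degreewise surjections precisely because $\EE$ is admissible with the transferred model structure, which is what Lemma \ref{cofibration} supplies; you do cite this, so the proof is complete.
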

\begin{proof}
Suppose that we have a map $h:R\rightarrow S$ in $\EAlgk$ such that $R$ and $S$ are objects in $\CAlgk$. 
By definition of the operad $\EE$ the map $h$ has to be associative, therefore $h$ is a morphism in $\CAlgk$ since $R$ and $S$ are commutative differential graded algebras. The forgetful functor $U^{'}: \CAlgk\rightarrow \EAlgk$ is fully faithful, this implies that $str(S)=S$ for any $S\in\CAlgk$. We have a commutative diagram induced by the unit $\nu$ of the adjunction $(U^{'}, ~str)$ :
$$\xymatrix{
A\ar[r]^{f}\ar[d]_{\nu_{A}} & S\ar[d]^-{\nu_{S}=id}\\
str(A)\ar[r]^{str(f)} & str(S)=S.
}$$
We conclude that $f=str(f)\circ\nu_{A}.$ The surjectivity of the $\nu_{A}$ follows from the universal property of 
$str(A)$. Hence, $\nu_{A}$ is a fibration in $\EAlgk.$ 
\end{proof}
\begin{proposition}\label{abelianization}
Any map $f:A\rightarrow S$ in $\Algk$, where $S$ is in $\CAlgk$ factors in a unique way as $A\rightarrow Ab(A)\rightarrow S$ and the forgetful functor $U\circ U^{'}: \CAlgk\rightarrow \Algk$ is fully faithful. Moreover, the unit of the adjunction $\nu_{A}:A\rightarrow Ab(A)$ is a fibration. 
\end{proposition}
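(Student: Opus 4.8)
The plan is to follow verbatim the strategy of Proposition~\ref{strictification}, substituting the abelianization adjunction $(Ab,U\circ U')$ for the strictification adjunction $(str,U')$. First I would establish that $U\circ U'$ is fully faithful. If $h:R\rightarrow S$ is a morphism in $\Algk$ whose source and target happen to lie in $\CAlgk$, then commutativity of the products of $R$ and $S$ is a property of these objects rather than extra structure that $h$ must respect; consequently $h$ is automatically a morphism in $\CAlgk$, and the inclusion induces a bijection $\Hom_{\CAlgk}(R,S)\cong\Hom_{\Algk}(R,S)$. As in the previous proposition this forces $Ab(S)=S$ for every $S\in\CAlgk$, with the unit $\nu_S$ the identity.

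Next I would read off the factorization from naturality of the unit $\nu$ of the adjunction $(Ab,U\circ U')$. For a map $f:A\rightarrow S$ with $S\in\CAlgk$ the square
$$\xymatrix{
A\ar[r]^{f}\ar[d]_{\nu_{A}} & S\ar[d]^-{\nu_{S}=id}\\
Ab(A)\ar[r]^{Ab(f)} & Ab(S)=S
}$$
commutes, so $f=Ab(f)\circ\nu_A$. Uniqueness is precisely the universal property of the unit: any $g:Ab(A)\rightarrow S$ in $\CAlgk$ with $g\circ\nu_A=f$ equals the adjoint transpose of $f$ and is thus determined by $f$.

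Finally I would verify that $\nu_A$ is a fibration, i.e.\ a surjection in each degree. Because $Ab=str\circ Ab_{\infty}$ is left adjoint to the fully faithful inclusion $U\circ U'$, uniqueness of left adjoints identifies its value on underlying complexes with the classical abelianization: $Ab(A)$ is the quotient of $A$ by the two-sided differential graded ideal generated by the graded commutators $ab-(-1)^{|a||b|}ba$ (here $k=\mathbb{Q}$, so no denominators obstruct this). The unit $\nu_A:A\rightarrow Ab(A)$ is then the quotient projection, hence degreewise surjective and therefore a fibration in $\Algk$.

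The step I expect to demand the most care is this last surjectivity claim, since $Ab$ is packaged as the two-step composite $str\circ Ab_{\infty}$ passing through $\EAlgk$; the efficient way around it is to invoke uniqueness of the left adjoint to the inclusion of commutative algebras, which reduces the underlying functor to the strict commutator quotient and makes the surjectivity of $\nu_A$ immediate.
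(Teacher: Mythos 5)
Your proposal is correct and follows essentially the same route as the paper, whose proof of this proposition is literally a deferral to the argument of Proposition~\ref{strictification}: full faithfulness because commutativity is a property rather than structure, the factorization from naturality of the unit, and surjectivity of $\nu_A$ from the universal property. Your identification of $Ab(A)$ as the graded-commutator quotient (via uniqueness of left adjoints to the fully faithful inclusion) is a more explicit justification of the surjectivity step than the paper offers, but it is the same underlying argument.
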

\begin{proof}
The proof is the same as in \ref{strictification}.
\end{proof}
\begin{proposition}\label{unite}
Suppose that we have a trivial cofibration $k\rightarrow \underline{k}$ in $\EAlgk$. Then the universal map $\pi: ~Ab(\underline{k})\rightarrow str(\underline{k})$ is a trivial fibration and admits a section in the category $\CAlgk$.
\end{proposition}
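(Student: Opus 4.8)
The plan is to show that $\pi$ is a trivial fibration; once this is established, a section drops out of a single lifting. Throughout, $Ab(\underline{k})$ means $Ab(U(\underline{k}))=str(Ab_{\infty}(U(\underline{k})))$, and $\pi$ is the morphism of $\CAlgk$ obtained by applying $str$ to the counit $\epsilon:Ab_{\infty}U(\underline{k})\rightarrow\underline{k}$ of the adjunction $(Ab_{\infty},U)$. Equivalently, by the universal property of $Ab$ recorded in Proposition \ref{abelianization}, $\pi$ is characterised as the unique morphism of $\CAlgk$ through which the underlying DGA morphism of the unit $\nu_{\underline{k}}:\underline{k}\rightarrow str(\underline{k})$ factors, namely $U(\underline{k})\xrightarrow{\nu_{U(\underline{k})}}Ab(\underline{k})\xrightarrow{\pi}str(\underline{k})$. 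I will treat the two defining conditions of a trivial fibration separately: weak equivalence, then degreewise surjectivity.

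First I would check that $\pi$ is a weak equivalence by two-out-of-three. Since $k\rightarrow\underline{k}$ is a trivial cofibration whose domain is the initial (hence cofibrant) object, $\underline{k}$ is cofibrant in $\EAlgk$. Writing $U=\phi^{\ast}$ for the cofibration $\phi:\Ass\hookrightarrow\EE$, Lemma \ref{cofibration} shows that $U(k\rightarrow\underline{k})$ is a cofibration in $\Algk$ (its domain $k$ is cofibrant in $\Chk$), while it is a weak equivalence because $U$ is right Quillen and every object is fibrant; thus $k\rightarrow U(\underline{k})$ is a trivial cofibration. Applying the left Quillen functors $Ab_{\infty}$ and then $str$, both of which preserve trivial cofibrations and fix the initial object $k$, yields a trivial cofibration $k\rightarrow Ab(\underline{k})$. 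Likewise $str$ applied to $k\rightarrow\underline{k}$ gives a trivial cofibration $k\rightarrow str(\underline{k})$. As $k$ is initial in $\CAlgk$, the triangle on $k$, $Ab(\underline{k})$, $str(\underline{k})$ commutes, so two-out-of-three forces $\pi$ to be a weak equivalence.

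Next I would show that $\pi$ is a fibration, that is, degreewise surjective; this is the step I expect to be the main obstacle, and the idea is to compare $\pi$ with the two unit fibrations supplied by Propositions \ref{strictification} and \ref{abelianization}. By Proposition \ref{strictification}, $\nu_{\underline{k}}:\underline{k}\rightarrow str(\underline{k})$ is a fibration in $\EAlgk$, hence degreewise surjective; since $U$ is the identity on underlying graded modules, the underlying DGA morphism remains surjective. By Proposition \ref{abelianization} this underlying morphism factors uniquely as $\nu_{U(\underline{k})}$ followed by a morphism of $\CAlgk$, and by the characterisation of $\pi$ above that morphism is precisely $\pi$. Therefore the composite $\pi\circ\nu_{U(\underline{k})}$ is degreewise surjective, which forces $\pi$ to be degreewise surjective. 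Combined with the previous paragraph, $\pi$ is a trivial fibration.

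Finally, the section is obtained by lifting. Since $str$ is left Quillen and $\underline{k}$ is cofibrant, $str(\underline{k})$ is cofibrant, i.e. $k\rightarrow str(\underline{k})$ is a cofibration. In the square with this cofibration on the left, $\pi$ on the right, the initial map $k\rightarrow Ab(\underline{k})$ on top and $\mathrm{id}$ on the bottom (which commutes because $k$ is initial), a lift $s:str(\underline{k})\rightarrow Ab(\underline{k})$ exists and satisfies $\pi\circ s=\mathrm{id}$, giving the desired section in $\CAlgk$.
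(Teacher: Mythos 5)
Your argument is correct and follows essentially the same route as the paper: both establish that $\pi$ is a weak equivalence by pushing the trivial cofibration $k\rightarrow\underline{k}$ through the left Quillen functors (using Lemma \ref{cofibration} to pass to $\Algk$ first) and applying two-out-of-three, deduce surjectivity from the factorization of the unit fibration $\underline{k}\rightarrow str(\underline{k})$ through $Ab(\underline{k})$, and then produce the section by lifting against the cofibration $k\rightarrow str(\underline{k})$. Your write-up is in fact somewhat more explicit than the paper's about why $\pi$ is a fibration and about the cofibrancy hypotheses needed to invoke Lemma \ref{cofibration}.
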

\begin{proof}
We consider the following commutative diagram in $\EAlgk$: 
$$\xymatrix{
k\ar[r]^{\sim}\ar[d]_{id} & \underline{k}\ar[d]^-{}\\
k=str(k)\ar[r]^{\sim} & str(\underline{k}).
}$$
The map $k\rightarrow str(\underline{k})$ is an equivalence since $str$ is left Quillen functor, the same thing holds for the abelianization functor i.e., $\underline{k}\rightarrow Ab(\underline{k})$ is a trivial fibration, since $k\rightarrow \underline{k}$ is a trivial cofibration in $\Algk$ \ref{cofibration} and $Ab$ is a left Quillen functor. On another hand the map $\underline{k}\rightarrow str(\underline{k})$, which is a trivial fibration in $\EAlgk$ and hence in $\Algk$, can be factored (cf \ref{abelianization}) as  $\underline{k}\rightarrow Ab(\underline{k})\rightarrow str(\underline{k})$, where $Ab(\underline{k})\rightarrow str(\underline{k})$ is a trivial fibration between cofibrant object in $\CAlgk$. It follows that we have a retract $l: str(\underline{k})\rightarrow Ab(\underline{k})$.
\end{proof}

\begin{definition}
The suspension functor in the pointed model categories $\CAlgk^{\ast}$, $\Algk^{\ast}$ and $\EAlgk^{\ast}$  are denoted by $\mathrm{B}$, $\Sigma$ and $\mathrm{B}_{\infty}$. 
\end{definition}

\begin{lemma}\label{suspension}
Suppose that $A$ is a cofibrant object in $\EAlgk^{\ast}$, and $i:A\rightarrow \underline{k}$ a cofibration, then $str(\mathrm{B}_{\infty}A)$ is a retract of $Ab(\Sigma A)$ in the category $\CAlgk$.
\end{lemma}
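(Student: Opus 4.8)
The plan is to compute both sides as homotopy pushouts and then compare them through the counit of the Quillen pair $(Ab_{\infty},U)$, using Proposition \ref{unite} to split the comparison over the basepoint.

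First I would record that $i\colon A\to\underline{k}$ is a cofibration between cofibrant objects of $\EAlgk^{\ast}$, so by Lemma \ref{cofibration} its image $U(i)\colon UA\to U\underline{k}$ is a cofibration between cofibrant objects of $\Algk^{\ast}$. Since $str$ and $Ab=str\circ Ab_{\infty}$ are left Quillen, they preserve these cofibrations and commute with the homotopy pushout computing the suspension. Hence $str(\mathrm{B}_{\infty}A)=str(\underline{k})\sqcup_{str(A)}str(\underline{k})$ and $Ab(\Sigma A)=Ab(U\underline{k})\sqcup_{Ab(UA)}Ab(U\underline{k})$, each formed along the single cofibration $str(i)$, resp. $Ab(U(i))$, used on both legs.

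Next I would produce the retraction. The counit $\theta\colon Ab_{\infty}U\Rightarrow\mathrm{id}$, with component $\theta_{A}\colon Ab_{\infty}UA\to A$, gives after applying $str$ the natural comparison maps $\pi_{A}\colon Ab(UA)\to str(A)$ and $\pi\colon Ab(U\underline{k})\to str(\underline{k})$; these are the universal factorizations furnished by Proposition \ref{abelianization}, and they are compatible with $str(i)$ and $Ab(U(i))$. Applied to the two pushout diagrams they assemble into a map $r\colon Ab(\Sigma A)\to str(\mathrm{B}_{\infty}A)$. By Proposition \ref{unite} the map $\pi$ is a trivial fibration admitting a section $l\colon str(\underline{k})\to Ab(U\underline{k})$ in $\CAlgk$; this is precisely the step that forces $k=\mathbb{Q}$. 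I would then build a candidate section $s\colon str(\mathrm{B}_{\infty}A)\to Ab(\Sigma A)$ by applying $l$ to each of the two basepoint corners. By the universal property of the left-hand pushout this is legitimate as soon as the two composites $str(A)\rightrightarrows Ab(\Sigma A)$ agree, i.e. as soon as $l\circ str(i)$ factors through the amalgamating cofibration $Ab(U(i))$; granting this, the identity $r\circ s=\mathrm{id}$ is verified on the two corners, where it collapses to $\pi\circ l=\mathrm{id}$.

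The hard part will be exactly this gluing. A strict factorization of $l\circ str(i)$ through $Ab(U(i))$ would, after composing with $\pi$ and using naturality $\pi\circ Ab(U(i))=str(i)\circ\pi_{A}$, force $\pi_{A}$ itself to admit a section, which fails for genuinely noncommutative $A$ since $\theta_{A}$ is not a weak equivalence. I therefore expect the resolution to be homotopy-coherent rather than strict: the two composites into the pushout become equal after $\pi$, and $\pi$ is a trivial fibration, so they are homotopic and glue to a section in $\Ho(\CAlgk)$, exhibiting $str(\mathrm{B}_{\infty}A)$ as a retract of $Ab(\Sigma A)$ at the homotopy level. This is all that is required for the injectivity on $\pi_{i}$ in Theorem \ref{thm}, and it confines the interplay between associativity and commutativity entirely to Proposition \ref{unite}, where rationality is used.
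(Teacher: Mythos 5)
Your overall strategy coincides with the paper's: present both $str(\mathrm{B}_{\infty}A)=\mathrm{B}[str(A)]$ and $Ab(\Sigma A)$ as pushouts of two copies of a contractible algebra along $A$, take the map assembled from the canonical comparisons $\pi\colon Ab(\underline{k})\to str(\underline{k})$ as the retraction, build the section corner-wise from the splitting $l$ of Proposition \ref{unite}, and check the composite on the corners, where it collapses to $\pi\circ l=\mathrm{id}$. You have also put your finger on the one genuinely delicate point: whether the two composites $str(A)\rightrightarrows Ab(\Sigma A)$ obtained by running $h_{c}\circ l_{c}\circ str(i)$ through the two copies of $Ab(\underline{k})$ coincide. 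This is precisely the commutativity of the outer square in the paper's third diagram, which the paper asserts without verification, so up to this point the two arguments run in parallel (the paper derives $x_{c}\circ h_{c}=f_{c}\circ\pi$ from uniqueness of factorizations and surjectivity of $\underline{k}\to Ab(\underline{k})$, whereas you get the analogous identity by construction of $r$; that difference is cosmetic).

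The gap lies in your proposed resolution of the gluing. You claim the two composites ``become equal after $\pi$, and $\pi$ is a trivial fibration, so they are homotopic.'' But the composites land in $Ab(\Sigma A)$, not in $Ab(\underline{k})$, so postcomposing with $\pi$ does not parse; the map after which they provably agree is the comparison $r\colon Ab(\Sigma A)\to str(\mathrm{B}_{\infty}A)$ itself, and $r$ is \emph{not} a weak equivalence --- if it were, the lemma would assert an equivalence rather than a retract, contradicting Lurie's counterexample from the introduction. Hence ``equal after a trivial fibration, therefore homotopic'' does not apply. What is actually true is that both composites factor through $Ab(\underline{k})$, which is acyclic because $k\to\underline{k}$ is a trivial cofibration and $Ab$ is left Quillen; so both are null-homotopic in $\CAlgk^{\ast}$ and in particular homotopic to one another. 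To finish along your lines you would still need to choose that homotopy, glue over the double mapping cylinder, and re-verify $r\circ s\simeq\mathrm{id}$ there, none of which is supplied, and the outcome would only be a retract in $\Ho(\CAlgk)$ rather than in $\CAlgk$ as stated (though that weaker form does suffice for Theorem \ref{thm}). A smaller point: your argument that a strict factorization of $l\circ str(i)$ through $Ab(U(i))$ is impossible ``since $\theta_{A}$ is not a weak equivalence'' is a non sequitur --- a non-equivalence can perfectly well admit a section --- so you have not shown the strict route fails, only that it is not automatic. In short, the paper takes the strict commutativity of the outer square for granted; you rightly decline to, but the substitute you sketch does not yet work.
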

\begin{proof}
First of all if a map $f$ is associative, commutative resp. $\EE$-map we put an index $f_{a},~f_{c}$ resp. $f_{\infty}$, notice that by definition of the operad $\EE$ any $\EE$-map is a strictly associative map.  Suppose that $A$ is a cofibrant object in $\EAlgk$. Consider the following commutative square:
$$\xymatrix{
A\ar@{^{(}->}[rr]^{i_{\infty}} \ar@{^{(}->}[dd]^{i_{\infty}} & & \underline{k}\ar@{^{(}->}[dd]_{h_{a}} \ar@{.>}[rddd]^{f_{\infty}}  & \\
&&&\\
\underline{k}\ar@{^{(}->}[rr]^{h_{a}}  \ar@{.>}[rrrd]_{f_{\infty}} & &\Sigma A \ar@{.>}[rd]^-{\exists ! }_-{u_{a}}\cocartesien &  \\
& & &  \mathrm{B}_{\infty}A \cocartesien,
}$$
where $\Sigma A$ is the (homotopy \ref{cofibration}) pushout in $\Algk$ and $\mathrm{B}_{\infty}A$ is the (homotopy) pushout in $\EAlgk$. By proposition \ref{strictification} and proposition \ref{abelianization} we have a following commutative square in $\Algk$: 
$$\xymatrix{
\Sigma{A}\ar[rr]^{u_{a}}\ar[dd] & & \mathrm{B}_{\infty}A\ar[dd]\\
&& \\
Ab(\Sigma A)\ar[rr]^-{x_{c}} && str [\mathrm{B}_{\infty}A]= \mathrm{B}[str (A)].
}$$  

By \ref{unite} we have an inclusion of commutative differential graded algebras $l_{c}: str(\underline{k})\rightarrow Ab(\underline{k})$ and after strictification we obtain on another (homotopy) pushout square in $\CAlgk$ given by 
$$\xymatrix{
str(A)\ar@{^{(}->}[rr]^{i_{c}} \ar@{^{(}->}[dd]^{i_{c}} & & str(\underline{k})\ar@{^{(}->}[dd]_{f_{c}} \ar@{^{(}->}[r]^{l_{c}}  & Ab(\underline{k})\ar@{^{(}->}[ddd] ^{h_{c}}\\
&&&\\
str(\underline{k})\ar@{^{(}->}[rr]^{f_{c}}  \ar@{^{(}->}[d]_{l_{c}}  & &\mathrm{B} [str(A)] \ar@{.>}[rd]^{\exists ! }_-{u_{c}} \cocartesien &  \\
Ab(\underline{k})\ar@{^{(}->}[rrr]^{h_{c}} & & &  Ab(\Sigma(A)).
}$$
In order to prove that $\mathrm{B}[str (A)]$ is a retract of $Ab(\Sigma(A))$ it is sufficient to prove that 
$$ x_{c}\circ h_{c}\circ l_{c}=f_{c}.$$
By proposition \ref{strictification} and proposition \ref{abelianization}, the composition  $\EE$-maps 
$$\xymatrix{ \underline{k}\ar[r]^{f_{\infty}} & \mathrm{B}_{\infty}A\ar[r] &  str [\mathrm{B}_{\infty}A]
}$$
can be factored in a unique way as 
$$\xymatrix{ \underline{k}\ar[r] & Ab(\underline{k})\ar[r]^{\pi} & str(\underline{k})\ar[r]^-{\alpha_{c}} & str [\mathrm{B}_{\infty}A]=\mathrm{B}[str (A)].
}$$
By unicity, $\alpha_{c}=f_{c}$. On another hand, using the first poushout in $\EAlgk$, the previous composition $\underline{k}\rightarrow str [\mathrm{B}_{\infty}A]$ is factored  as 
$$\xymatrix{ \underline{k}\ar[r]^{h_{a}} &\Sigma A \ar[r]  & Ab(\Sigma A)\ar[r]^{x_{c}} &  str[\mathrm{B}_{\infty}A].
}$$
We summarize the previous remarks in the following commutative diagram:
$$\xymatrix{
\underline{k}\ar[r]^{pr}\ar[d]^{id} & Ab(\underline{k})\ar[r]^{\pi}\ar@{.>}[d]^{h_{c}} & str(\underline{k})\ar[r]^{f_{c}} &  str[\mathrm{B}_{\infty}A]\ar[d]^{id}\\
\underline{k} \ar[r] & Ab(\Sigma A) \ar[rr]^{x_{c}} & &  str[\mathrm{B}_{\infty}A]
}$$
by definition of $h_{a}$, the doted map $h_{c}$ makes the left square commutative. Since the whole square is commutative and the map $pr$ is surjective we conclude that $x_{c}\circ h_{c}=f_{c}\circ\pi$. Since the map $l_{c}: Str(\underline{k})\rightarrow Ab(\underline{k})$ is a retract of $\pi$ (Cf. \ref{unite}) i.e., $\pi\circ l_{c}= id$, we conclude that $x_{c}\circ h_{c}\circ l_{c}=f_{c}$. Finally, by unicity of the pushout, we deduce that the following composition 
$$\xymatrix{ \mathrm{B}[str (A)] \ar[r]^-{u_{c}} & Ab(\Sigma A) \ar[r]^{x_{c}}  & \mathrm{B}[str (A)]
}$$
is identity. 
\end{proof}
\section{Main result and applications}
\begin{theorem}\label{thm}
For any $R$ and $S$ in  $\CAlgk^{\ast}$, the induced map by the forgetful functor  
$$\Omega \Map_{\CAlgk^{\ast}}(R,S)\rightarrow \Omega  \Map_{\Algk^{\ast}}(R,S),$$
has a retract, in particular 
$$\pi_{i}\Map_{\CAlgk^{\ast}}(R,S)\rightarrow \pi_{i}\Map_{\Algk^{\ast}}(R,S)$$
is injective $\forall~ i>0$. 
\end{theorem}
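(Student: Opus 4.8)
The plan is to transport both loop spaces into $\CAlgk^{\ast}$ and there recognize the comparison map as the retract furnished by Lemma~\ref{suspension}. I shall use repeatedly the standard natural equivalence $\Omega\Map_{\C}(X,Y)\simeq\Map_{\C}(\Sigma X,Y)$, valid in any pointed model category $\C$ for $X$ cofibrant and $Y$ fibrant; here every object is fibrant, so only cofibrancy of the source needs attention. The two suspension--loop adjunctions (in $\CAlgk^{\ast}$ with suspension $\mathrm{B}$, and in $\Algk^{\ast}$ with suspension $\Sigma$) will be glued together across the Quillen equivalence $str\dashv U^{'}$ and the Quillen adjunction $Ab\dashv U\circ U^{'}$.

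First I would choose a cofibrant replacement $A\xrightarrow{\sim}U^{'}R$ in $\EAlgk^{\ast}$, arranged so that the augmentation yields a cofibration $i\colon A\rightarrow\underline{k}$, as required by Lemma~\ref{suspension}. By Lemma~\ref{cofibration} and the remark following it, the forgetful functor $U\colon\EAlgk\rightarrow\Algk$ carries $A$ to a cofibrant object $UA$ of $\Algk^{\ast}$; hence $UA$ is a cofibrant model of $U U^{'}R$ and its suspension $\Sigma A$ (formed in $\Algk^{\ast}$) is again cofibrant. Since $str\dashv U^{'}$ is a Quillen equivalence and $str(U^{'}R)=R$ by Proposition~\ref{strictification}, the derived counit $str(A)\rightarrow R$ is a weak equivalence, so I may take $str(A)$ as a cofibrant model of $R$.

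With these models in place I would run the identifications
$$\Omega\Map_{\CAlgk^{\ast}}(R,S)\simeq\Map_{\CAlgk^{\ast}}\bigl(str(\mathrm{B}_{\infty}A),S\bigr),$$
using $\mathrm{B}(str(A))=str(\mathrm{B}_{\infty}A)$, and
$$\Omega\Map_{\Algk^{\ast}}(R,S)\simeq\Map_{\Algk^{\ast}}(\Sigma A,U U^{'}S)\simeq\Map_{\CAlgk^{\ast}}\bigl(Ab(\Sigma A),S\bigr),$$
where the last step is the adjunction $Ab\dashv U\circ U^{'}$ applied with $\Sigma A$ cofibrant and $S$ fibrant. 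Lemma~\ref{suspension} provides a retract $str(\mathrm{B}_{\infty}A)\xrightarrow{u_{c}}Ab(\Sigma A)\xrightarrow{x_{c}}str(\mathrm{B}_{\infty}A)$ in $\CAlgk$; applying the contravariant functor $\Map_{\CAlgk^{\ast}}(-,S)$ exhibits $\Omega\Map_{\CAlgk^{\ast}}(R,S)$ as a retract of $\Omega\Map_{\Algk^{\ast}}(R,S)$, with section $x_{c}^{\ast}$ and retraction $u_{c}^{\ast}$.

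The hard part will be verifying that the section $x_{c}^{\ast}$ is exactly the map induced by the forgetful functor, rather than some unrelated splitting. For this I would observe that, under the two suspension--loop adjunctions, the forgetful map $\Omega\Map_{\CAlgk^{\ast}}(R,S)\rightarrow\Omega\Map_{\Algk^{\ast}}(R,S)$ is precomposition with the canonical comparison of suspensions $\Sigma A\rightarrow U(\mathrm{B}_{\infty}A)$ — this is the map $u_{a}$ built in the proof of Lemma~\ref{suspension}, arising because $U$ sends the $\EAlgk^{\ast}$-pushout to a cocone on the $\Algk^{\ast}$-pushout diagram. Naturality of the adjunction $Ab\dashv U\circ U^{'}$ then identifies precomposition by $u_{a}$ with precomposition by the map $x_{c}\colon Ab(\Sigma A)\rightarrow str(\mathrm{B}_{\infty}A)$, which is precisely $x_{c}^{\ast}$; this is the compatibility that makes the abstract splitting coincide with the forgetful map. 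Granting it, the retract of spaces gives split injections $\pi_{i}\Omega\Map_{\CAlgk^{\ast}}(R,S)\hookrightarrow\pi_{i}\Omega\Map_{\Algk^{\ast}}(R,S)$ for all $i\ge 0$, and since $\pi_{i}\Omega X=\pi_{i+1}X$ this yields the asserted injectivity of $\pi_{i}\Map_{\CAlgk^{\ast}}(R,S)\rightarrow\pi_{i}\Map_{\Algk^{\ast}}(R,S)$ for $i>0$.
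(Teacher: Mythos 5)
Your proposal follows essentially the same route as the paper: replace $R$ by a cofibrant $\EE$-model $A$, identify the two loop spaces with mapping spaces out of $\mathrm{B}[str(A)]=str(\mathrm{B}_{\infty}A)$ and $Ab(\Sigma A)$ via the suspension--loop adjunctions together with $str\dashv U^{'}$ and $Ab\dashv U\circ U^{'}$, and then apply Lemma~\ref{suspension} to split the comparison map. Your extra step verifying that the section $x_{c}^{\ast}$ is genuinely the map induced by the forgetful functor (through the comparison $u_{a}\colon\Sigma A\rightarrow U(\mathrm{B}_{\infty}A)$ of the two pushouts and naturality of the adjunctions) addresses a point the paper's proof leaves implicit, and is a worthwhile addition rather than a deviation.
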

\begin{proof}
Suppose that $R$ is (cofibrant) object in $\EAlgk$ and $S$ any object in $\CAlgk$. By adjunction, we have
that 
\begin{eqnarray}
 \Omega\Map_{\CAlgk^{\ast}}(str(R), S) & \sim & \Map_{\CAlgk^{\ast}}(\mathrm{B}[str(R)], S)\\
 & \sim &\Map_{\CAlgk^{\ast}}(str[\mathrm{B}_{\infty}R],S)\\
 &\sim& \Map_{\EAlgk^{\ast}}(\mathrm{B}_{\infty}R,S)\\
 &\sim & \Omega\Map_{\EAlgk^{\ast}}(R,S). 
\end{eqnarray}
By Lemma \ref{suspension}, we have a retract 
$$\Map_{\CAlgk^{\ast}}(\mathrm{B}[str(R)], S)\rightarrow \Map_{\CAlgk^{\ast}}(Ab(\Sigma R), S) \rightarrow \Map_{\CAlgk^{\ast}}(\mathrm{B}[str(R)], S).$$
Again by adjunction:
$$\Map_{\CAlgk^{\ast}}(Ab(\Sigma R), S)\sim \Map_{\Algk^{\ast}}(\Sigma R,S)\sim \Omega\Map_{\Algk^{\ast}}(R,S).$$
We conclude that
$$\xymatrix{ \Omega \Map_{\EAlgk^{\ast}}(R,S)\ar[r]^{U} & \Omega\Map_{\Algk^{\ast}}(R,S)\ar[r] &\Omega \Map_{\EAlgk^{\ast}}(R,S)}$$
is a retract. Hence, the forgetful functor $U$ induces a injective map on homotopy groups i.e.,
$$\pi_{i}\Map_{\CAlgk^{\ast}}(str(R),S)\simeq\pi_{i}\Map_{\EAlgk^{\ast}}(R,S)\rightarrow \pi_{i}\Map_{\Algk^{\ast}}(R,S)$$
is injective $\forall~ i>0$. 
\end{proof}
\subsection{Rational homotopy theory }
We give an application of our theorem \ref{thm} in the context of rational homotopy theory. Let $X$ be a simply connected rational space such that $\pi_{i}X$ is finite dimensional $\mathbb{Q}$-vector space for each $i>0$. Let $C^{\ast}(X)$ be the differential graded $\mathbb{Q}$-algebra cochain associated to $X$ which is a connective $\EAlgk$.  
By Sullivan theorem $\pi_{i}X\simeq \pi_{i}\Map_{\CAlgk^{\ast}}(C^{\ast}(X), \mathbb{Q}).$ 
By \ref{thm}, we have that $\pi_{i}X$ is a sub $\mathbb{Q}$-vector space of $\pi_{i}\Map_{\Algk^{\ast}}(R,S)$.
On another hand \cite{amrani2013mapping}, since $C^{\ast}(X)$ is  connective, we have that for any $i>1$ 
$$\pi_{i}\Map_{\Algk^{\ast}}(C^{\ast}(X),\mathbb{Q})\simeq\mathrm{HH}^{-1+i}(C^{\ast}(X),\mathbb{Q}),$$
where $\mathrm{HH}^{\ast}$ is the Hochschild cohomology. Since we have assumed finiteness condition on $X$, we have that  
$$\mathrm{HH}^{-1+i}(C^{\ast}(X),\mathbb{Q})\simeq\mathrm{HH}_{i-1}(C^{\ast}(X),\mathbb{Q}).$$
The functor $C^{\ast}(-, \mathbb{Q}): \Top^{op}\rightarrow \EAlgk$ commutes with finite homotopy limits, where $\Top$ is the category of simply connected spaces. Hence, 
$$\mathrm{HH}_{-1+i}(C^{\ast}(X),\mathbb{Q})=\mathrm{H}^{i-1}[C^{\ast}(X)\otimes^{\mathbf{L}}_{C^{\ast}(X\times X)}\mathbb{Q}]  \simeq \mathrm{H}^{i-1}(\Omega X,\mathbb{Q}).$$
We conclude that $\pi_{i}X$  is a sub $\mathbb{Q}$-vector space of $\mathrm{H}^{i-1}(\Omega X,\mathbb{Q}).$\\ 
More generally by Block-Lazarev result \cite{block2005andre} on rational homotopy theory and \cite{amrani2013mapping}, we have an injective map of $\mathbb{Q}$-vector spaces
$$\mathrm{AQ}^{-i}(C^{\ast}(X),C^{\ast}(Y))\rightarrow \mathrm{HH}^{-i+1}(C^{\ast}(X),C^{\ast}(Y)),$$
where the $C^{\ast}(X)$-(bi)modules structure on $C^{\ast}(Y)$ is given by $C^{\ast}(X)\rightarrow \mathbb{Q}\rightarrow C^{\ast}(Y)$, and $\mathrm{AQ}^{\ast}$ is the Andr\'e-Quillen cohomology. We also assume  that $X$ and $Y$ are simply connected and $i>1$. \\
More generally,  
$$\pi_{i}\Map_{\EAlgk}(R,S)=\mathrm{AQ}^{-i}(R,S)\rightarrow \mathrm{HH}^{-i+1}(R,S)=\pi_{i}\Map_{\Algk}(R,S)$$
is an injective map of  $\mathbb{Q}$-vector spaces for all $i>1$ and any augmented $\EE$-differential graded connective $\mathbb{Q}$-algebras $R$ and $S$, where the action of $S$ on $R$ is given by $S\rightarrow \mathbb{Q}\rightarrow R$.\\

\textbf{Acknowledgement :} I'm grateful to Beno\^it Fresse for his nice explanation of Lemma \ref{cofibration}, 
the key point of the proof is due to him.

\bibliographystyle{plain} 
\bibliography{rational}

\end{document}